\documentclass{amsart}
\usepackage{graphicx}
\vfuzz2pt 
\hfuzz2pt 
\newtheorem{thm}{Theorem}[section]
\newtheorem{cor}[thm]{Corollary}
\newtheorem{lem}[thm]{Lemma}

\theoremstyle{definition}

\theoremstyle{remark}

\numberwithin{equation}{section}

\newcommand{\F}{\mathcal{F}}
\newcommand{\cP}{\mathcal{P}}
\newcommand{\R}{\mathbb{R}}
\newcommand{\B}{\mathcal{B}}
\begin{document}

\title[]{Maximal  multiplier operators in $L^{p(\cdot)}(\mathbb{R}^{n})$ spaces}%
\author{Amiran Gogatishvili and Tengiz Kopaliani}%

\address{Amiran Gogatishvili \\
Institute of Mathematics of the Academy of Sciences of the Czech Republic \\
\'Zitna 25 \\
115 67 Prague 1, Czech Republic} \email{gogatish@math.cas.cz}

\address{Tengiz Kopaliani \\
Faculty of Exact and Natural Sciences\\
I. Javakhishvili Tbilisi State University\\
 University St. 2\\
 0143 Tbilisi, Georgia}
\email{tengiz.kopaliani@tsu.ge}

\keywords{spherical maximal function, variable Lebesgue spaces,
boundedness result} \subjclass{42B25, 46E30}

\thanks{ The research was in part supported by the grants  no. 13/06 and no. 31/48 of the Shota Rustaveli National Science Foundation.
 The research of A.Gogatishvili was partially supported by the grant P201/13/14743S of the Grant agency of the Czech Republic and RVO: 67985840.}

\begin{abstract}
In this paper we study some estimates of norms in variable exponent
Lebesgue spaces for maximal multiplier operators.We will consider
the case when multiplier  is the Fourier transform of a compactly
supported Borel measure.

\end{abstract}
\maketitle
\section{Introduction}

Given a multiplier $m\in L^{\infty}(\mathbb{R}^{n}),$ we define the
operators  $M_{t},\,t>0$ by
$(M_{t}f)^{\wedge}(\xi)=\widehat{f}(\xi)m(t\xi)$ and the maximal
multiplier operator
$$
\mathcal{M}_{m}f(x):=\sup\limits_{t>0}|(M_{t}f)(x)|
$$
which is well defined a priori for a Schwartz function $f$ in
$S(\mathbb{R}^{n})$.

It is well known, if multiplier $m$  satisfies well known
Mikhlin-H\"{o}mander
 condition
 $$
 |\partial^{\alpha}m(\xi)|\leq C_{\alpha}|\xi|^{\alpha}
 $$
 for all (or sufficiently large) multiindices $\alpha$, then the multiplier operator
 $f \mapsto \F^{-1}[m\widehat f]$ is bounded in
 $L^{p}(\mathbb{R}^{n})$ when $1<p<\infty$ (see \cite{H}, \cite{M}). 
  Note that maximal operator $\mathcal{M}_{m}$
 formed by multiplier $m$ with  Mikhlin-H\"{o}mander condition  in general
 not bounded on $L^{p}(\mathbb{R}^{n}).$ The corresponding example
 can be find in \cite{CGS}.

 We will consider the case when multiplier $m$ is the Fourier
transform of a compactly supported Borel measure. In this case the
operator $M_{t},\,t>0$ we can represent as a convolution operator
$$
M_{t}f(x)=\int_{S}f(x-ty)d\sigma(y),
$$
where $\sigma$ is a compactly supported Borel measure on the set
$S\subset\mathbb{R}^{n}$ and $\widehat{\sigma}(\xi)=m(\xi).$
Obviously we have
$$
\mathcal{M}_{m}f(x)\equiv\mathcal{M}_{S}f(x):=\sup\limits_{t>0}\left|\int_{S}f(x-ty)d\sigma(y)\right|.
$$

We say that $\sigma$ is locally uniformly $\beta$-dimensional
($\beta>0$) if $\sigma(\mbox{B}(x,R))\leq C_\beta R^{\beta},$  where
$\mbox{B}(x,R)$ is a ball of radius $R\leq1$ centered at $ x.$
It is easy to see that a locally uniformly $\beta$-dimensional measure
must be absolutely continuous with respect to $\beta$-dimensional Hausdorff
measure $\mu_\beta$, but such a measure need not exhibit any actual "fractal"
behavior. Thus, for example, Lebesgue measure is locally uniformly
$\beta$-dimensional for any $\beta < n$. We can allow $\beta = 0$ in these definitions, in
which case a measure is uniformly $0$-dimensional if and only if it is finite,
and locally uniformly $O$-dimensional if and only if $\sigma(B_1(x))$ is uniformly
bounded in $x$.

Rubio de Francia \cite{R} proved following
\begin{thm}\label{th1.1}
 If $m(\xi)$  is the Fourier transform of a compactly supported
 Borel measure and satisfies $|m(\xi)|\leq(1+|\xi|)^{-a}$ for some
 $a>1/2$ and all $\xi\in\mathbb{R}^{n},$ then the maximal operator
 $\mathcal{M}_{m}$ maps $L^{p}(\mathbb{R}^{n})$ to itself when
 $p>\frac{2a+1}{2a}.$
\end{thm}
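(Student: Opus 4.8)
The plan is to establish an $L^2$ estimate with an $\eps$-loss in a Sobolev-type sense and then interpolate it against the trivial $L^1 \to L^1$ (indeed $L^\infty \to L^\infty$) bound coming from the finiteness of the measure. Write $M_t f = f * \sigma_t$, where $\sigma_t$ is the dilate of $\sigma$ by $t$, so that $(M_t f)^\wedge(\xi) = \widehat f(\xi)\, m(t\xi)$. The standard device for controlling a continuous supremum over $t>0$ by a square function is the inequality
\[
\sup_{t>0} |M_t f(x)|^2 \leq |M_1 f(x)|^2 + 2\int_0^\infty \abs{M_t f(x)} \, \abs{\tfrac{d}{dt} M_t f(x)}\, dt,
\]
together with Cauchy--Schwarz, which reduces matters to bounding the two $L^2$-valued square functions $\big(\int_0^\infty |M_t f|^2 \, \tfrac{dt}{t}\big)^{1/2}$ and $\big(\int_0^\infty |t\,\partial_t M_t f|^2 \, \tfrac{dt}{t}\big)^{1/2}$ on $L^2(\R^n)$. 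A dyadic decomposition $t \in [2^j, 2^{j+1})$ is convenient: one splits $f$ in frequency according to $|\xi| \sim 2^{-j}$ using a Littlewood--Paley partition, and the point is that on each such piece the multiplier $m(t\xi)$ and its $t$-derivative $\xi\cdot(\nabla m)(t\xi)$ are governed by the decay hypothesis $|m(\xi)| \leq (1+|\xi|)^{-a}$.

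The heart of the matter is a Plancherel computation. For the undifferentiated square function,
\[
\int_0^\infty \int_{\R^n} |M_t f(x)|^2 \, dx \, \frac{dt}{t} = \int_{\R^n} |\widehat f(\xi)|^2 \left(\int_0^\infty |m(t\xi)|^2 \, \frac{dt}{t}\right) d\xi,
\]
and the inner integral is $\int_0^\infty |m(s \, \xi/|\xi|)|^2 \, ds/s$, which by the hypothesis $|m(\xi)| \lesssim (1+|\xi|)^{-a}$ with $a > 1/2$ is finite and bounded uniformly in the direction $\xi/|\xi|$ — the decay at $s \to \infty$ gives $\int^\infty s^{-2a}\,ds/s < \infty$, and near $s=0$ the boundedness of $m$ suffices. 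This yields $\big\|(\int_0^\infty |M_t f|^2\,dt/t)^{1/2}\big\|_2 \lesssim \|f\|_2$. For the derivative square function one needs an analogous bound on $\int_0^\infty |\xi \cdot (\nabla m)(t\xi)|^2 \, dt/t$; since $m = \widehat\sigma$ with $\sigma$ compactly supported, $\nabla m = \widehat{(-2\pi i y\, d\sigma)}$ is again the Fourier transform of a finite compactly supported measure, but it does \emph{not} a priori inherit the decay of $m$. This is the main obstacle: one must either assume (or derive from $a>1/2$ via an interpolation/averaging argument over dilations, as in Rubio de Francia's original treatment) a comparable decay for the relevant derivative, or — cleaner — avoid differentiating $m$ by instead discretizing $t$ over a lacunary sequence and summing, using that $\|\sup_{t \in [2^j,2^{j+1}]} |M_t f|\|_2^2$ over a single dyadic block is controlled by $\|P_j f\|_2^2$ with a fixed constant (again by Plancherel and the fact that on the block $m(t\xi)$ varies boundedly), and then summing the dyadic pieces using $a > 1/2$ to make the geometric series converge. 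I expect the cleanest route to be: reduce to a single scale by Littlewood--Paley, prove $\|\mathcal{M}_S P_j f\|_2 \lesssim 2^{-\delta|j|}\|P_j f\|_2$ for some $\delta = \delta(a) > 0$ (this is where $a > 1/2$ enters quantitatively), sum in $j$, and conclude the $L^2$ bound.

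Once the $L^2 \to L^2$ bound is in hand, together with the trivial $\mathcal{M}_S : L^\infty \to L^\infty$ bound (immediate from $|M_t f(x)| \leq \|f\|_\infty \, |\sigma|(\R^n)$), Marcinkiewicz interpolation gives boundedness on $L^p$ for all $2 < p < \infty$. To reach the sharp range $p > \frac{2a+1}{2a}$ one interpolates instead a \emph{family} of estimates: one shows that $\mathcal M_m$ is bounded from $L^2$ to $L^2$ with a quantitative dependence that degrades as the decay exponent drops, set up so that analytic interpolation (or a vector-valued interpolation with change of measure, in the spirit of Stein--Tomas) against the $L^\infty$ endpoint yields exactly the threshold $p = (2a+1)/(2a)$. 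Concretely, for $p$ in this range one chooses $\theta$ with $\frac 1p = \frac{1-\theta}{2} + \frac{\theta}{\infty} = \frac{1-\theta}{2}$ and checks that the relevant partial square-function sum converges precisely when $a(1-\theta) > \theta/2$, i.e.\ $a > \frac{\theta}{2(1-\theta)} = \frac{1}{2}\cdot\frac{2/p' }{\,\cdots\,}$, which rearranges to $p > \frac{2a+1}{2a}$. The duality/interpolation bookkeeping is routine; the genuinely substantive input remains the single-scale Plancherel estimate and the verification that the hypothesis $a > 1/2$ is exactly what makes the scale-summation converge.
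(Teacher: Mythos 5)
First, a point of orientation: the paper does not actually prove Theorem~\ref{th1.1} --- it is quoted from Rubio de Francia \cite{R} as background. The closest thing to a proof in the paper is the argument for Theorem~\ref{th2.1}, which follows the same skeleton you propose (Littlewood--Paley decomposition $m=\sum_j m_j$ with $m_j$ supported where $|\xi|\approx 2^{j}$, a single-scale bound $\|\mathcal{M}_{j}f\|_{2}\le C2^{(1/2-\alpha)j}\|f\|_{2}$ obtained by the square-function/$t$-derivative device, a second endpoint, and summation of a geometric series); but note that the paper must assume decay of $\nabla m$ as a separate hypothesis (condition 3)), and its second endpoint is the crude pointwise bound $\mathcal{M}_{j}f\le C2^{j(n-\beta)}Mf$, which yields the dimension-dependent range of Corollary~\ref{cor2.4} --- strictly larger threshold than $\frac{2a+1}{2a}$ when $n\ge 2$. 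So your plan, even if completed along the lines you indicate, lands on something like Corollary~\ref{cor2.4} rather than on Theorem~\ref{th1.1}.

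Beyond that, three steps in your sketch do not hold as written. (i) The inner integral $\int_{0}^{\infty}|m(t\xi)|^{2}\,dt/t$ diverges as $t\to 0$: since $m(0)=\sigma(\mathbb{R}^{n})\neq 0$, ``boundedness of $m$ near $s=0$'' gives $\int_{0}^{1}ds/s=\infty$; the low-frequency part must be split off and dominated by the Hardy--Littlewood maximal function, which is exactly why the decomposition isolates a separate piece $m_{0}$. (ii) Your proposed ``cleaner route'' --- controlling $\|\sup_{t\in[2^{j},2^{j+1}]}|M_{t}f|\|_{2}$ by Plancherel because ``$m(t\xi)$ varies boundedly on the block'' --- is false: a supremum over a continuum of $t$ is not controlled by the $L^{2}$ norm at each fixed $t$. (Take $\sigma=\delta_{y_{0}}$: then $\sup_{t\in[1,2]}|f(x-ty_{0})|$ is infinite a.e.\ for generic $f\in L^{2}$.) The $t$-derivative cannot be avoided; the whole point of the threshold $a>1/2$ is the Cauchy--Schwarz balance between the $2^{-aj}$ decay of the $m_{j}$-square function and the $2^{j}$ growth of the $t\partial_{t}$-square function, producing the net factor $2^{(1/2-a)j}$, and the lack of automatic decay of $\nabla m$ that you flag is precisely the part you leave unresolved. (iii) The final interpolation is aimed at the wrong endpoint: $\frac{1}{p}=\frac{1-\theta}{2}+\frac{\theta}{\infty}$ forces $p\ge 2$, whereas the claimed threshold $\frac{2a+1}{2a}$ lies in $(1,2)$. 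To get below $L^{2}$ one must interpolate the decaying $L^{2}$ bounds for $\mathcal{M}_{j}$ against an estimate near $L^{1}$ (a weak $(1,1)$ or $H^{1}\to L^{1}$ bound with controlled growth in $j$); with a weak $(1,1)$ norm $O(2^{j})$ for $\mathcal{M}_{j}$ the arithmetic does yield exactly $p>\frac{2a+1}{2a}$, but no bound of that quality appears anywhere in your argument.
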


The case when  $\sigma$ is  normalized surface measure  on   the
$(n-1)$-dimensional unit sphere was investigated by Stein \cite{St}.
  According to  Stein's theorem for
corresponding maximal operator (spherical maximal operator)
$$
\|\mathcal{M}_{S}f\|_{L^{p}(\mathbb{R}^{n})}\leq
C_{p}\|f\|_{L^{p}(\mathbb{R}^{n})}
$$
holds if $p>n/(n-1),\,n\geq3,$ where $f$ is initially taken to be in
the class of rapidly decreasing functions. The two-dimensional
version of this result was proved by Burgain \cite{B}. The key
feature of the spherical maximal operator is the non-vanishing
Gaussian curvature of the sphere. Indeed, one obtains the same
$L^{p}$ bounds if the sphere is replaced by a piece of any
hypersurface in $\mathbb{R}^{n}$ with everywhere non-vanishing
Gaussian curvature (see \cite{Gr}).

Note that for normalized surface measure on the sphere we have
$|\widehat{\sigma}(\xi)|\leq C(1+|\xi|)^{-(n-1)/2}$ and from Theorem
Rubio de Francia  follows Stein's theorem on boundedness spherical
maximal operator in $L^{p}(\mathbb{R}^{n})$ (see \cite{R}).  More
generally, if  $\sigma$ is smooth compactly supported measure in a
hypersurface on $\mathbb{R}^{n}$ with $k$ non vanishing principal
curvatures $(k>1),$ then $|\widehat{\sigma}(\xi)|\leq
C(1+|\xi|)^{-k/2}$ and  from Theorem Rubio de Francia  follows
Greenleaf's  theorem ( see \cite{Gr}, \cite{R}).

The main tool used in proving Rubio de Francia's  maximal theorems
is the square function technique. Essentially, this says that if the
Fourier transform $m(\xi)$ of a compactly supported Borel measure
$\sigma$ has decay of order $-1/2-\epsilon;\,\epsilon>0$ i.e.,
\begin{equation} \label{1.1}
|m(\xi)|\leq C(1+|\xi|)^{-1/2-\epsilon}
\end{equation}
then the maximal  operator $\mathcal{M}_{m}$
 is bounded on $L^{2}.$  A modified proof of this results due by Iosevich and Sawyer
 (See Theorem 15 in \cite{IoSa1}) shows that
 the (1.1) condition can be
replaced by  more generally  conditions
$$
\left\{\int_{1}^{2}|m(t\xi)|^{2}dt\right\}^{1/2}\leq
C(1+|\xi|)^{-1/2}\gamma(|\xi|),
$$
$$
\left\{\int_{1}^{2}|\nabla m(t\xi)|^{2}dt\right\}^{1/2}\leq
C(1+|\xi|)^{-1/2}\gamma(|\xi|),
$$
where $\gamma$ is bounded and nonincreasing on $[0,\infty),$ and
$\sum_{n=0}^{\infty}\gamma(2^{n})<\infty.$

Our aim of this paper is to study boundedness properties of the Rubio
de Francia's maximal multiplier operator $\mathcal{M}_{m}$ in
variable Lebesgue spaces.

The boundedness of the spherical maximal operator in
variable Lebesgue spaces was studied in the  papers \cite{FGK} and \cite{FGK1}.


\section{The main results}

The Lebesgue spaces $L^{p(\cdot)}(\mathbb{R}^{n})$ with variable
exponent  and the corresponding variable Sobolev spaces
$W^{k,p(\cdot)}(\mathbb{R}^{n})$ are of interest for their
applications to modeling problems in physics, and to the study of
variational integrals and partial differential equations with
non-standard growth condition (see \cite{DHHR}, \cite{CUF}).

We define $\cP(\R^n)$ to be the set of all measurable functions $p: \mathbb{R}^{n} \to[1,\infty]$.
Functions $p\in \cP(\R^n)$ are called variable exponents on $\R^n$. We define $p^-=\mbox{essinf}_{x\in \mathbb{R}^{n}}p(x)$ and $p^+=\mbox{esssup}_{x\in\mathbb{R}^{n}}p(x)$.
If $p^+ < \infty$, then we call $p$ a bounded variable exponent.


Let $p\in \cP(\R^n)$,  $L^{p(\cdot)}(\mathbb{R}^{n})$ denotes the set of
measurable functions $f$ on $\mathbb{R}^{n}$ such that for some
$\lambda>0$
$$
\int_{\mathbb{R}^{n}}\left(\frac{|f(x)|}{\lambda}
\right)^{p(x)}dx<\infty.
$$
This set becomes a Banach function space when equipped with the norm
$$
\|f\|_{p(\cdot)}=\inf\left\{\lambda>0:\,\,\int_{\mathbb{R}^{n}}\,
\left(\frac{|f(x)|}{\lambda}\right)^{p(x)}dx\leq 1 \right\}.
$$

Let $B(x,r)$ denote the open ball in $\mathbb{R}^{n}$ of radius $r$
and center $x.$ By $|B(x,r)|$ we denote $n-$dimensional Lebesgue
measure of $B(x,r).$  The Hardy-Littlewood maximal operator $M$ is
defined on locally integrable function $f$ on $\mathbb{R}^{n}$ by
the formula
$$
Mf(x)=\sup_{r>0}\frac{1}{|B(x,r)|}\int_{B(x,r)}|f(y)|dy.
$$

 In many applications a crucial step has
been to show that Hardy-Littlwood maximal operator is bounded on a
variable $L^{p(\cdot)}$ spaces. Note that many classical operators in
harmonic analysis such as singular integrals, commutators and
fractional integrals are bounded on the variable Lebesgue space
$L^{p(\cdot}(\mathbb{R}^{n})$ whenever the Hardy-Littlewood maximal
operator is bounded on $L^{p(\cdot)}(\mathbb{R}^{n}).$

 Let $\mathcal{B}(\mathbb{R}^{n})$ be the class of all functions $p \in \cP(\mathbb{R}^{n})$ for which the
 Hardy-Littlewood maximal operator $M$ is bounded on $L^{p(\cdot)}(\mathbb{R}^{n}).$
  This class has been  a focus of intense study in recent years. We refer to the books  \cite{DHHR}, \cite{CUF},
where several results on maximal, potential and singular integral operators
in variable Lebesgue spaces were obtained.

We say that a function $p: \mathbb{R}^{n} \to(0,\infty)$ is locally
log-H\"older continuous on $\mathbb{R}^{n}$ if there exists $c_1 >
0$ such that
$$
|p(x)-p(y)|\le c_1 \frac{1}{\log(e + 1/|x- y|)}
$$
for all $x, y\in \mathbb{R}^{n}$, $|x-y|,1/2$.  We say that $p(\cdot)$ satisfies
the log-H\"older decay condition if there exist $p_\infty\in
(0,\infty)$ and a constant $c_2 > 0$ such that
$$
|p(x)-p_\infty|\le c_2 \frac{1}{\log(e + |x|)}
$$
for all $x\in \mathbb{R}^{n}$. We say that $p(\cdot)$ is globally
log-H\"older continuous in $\mathbb{R}^{n}$ ($ p(\cdot)\in
\mathcal{P}_{\log}$) if it is locally log-H\"older continuous and
satisfies the log-H\"older decay condition.

 If
$p:\mathbb{R}^{n}\to (1,\infty)$ is globally log-H\"older continuous
function in $\mathbb{R}^{n}$ and $p^->1$, then the classical
boundedness theorem for the Hardy-Littlewood maximal operator can be
extended to $L^{p(\cdot)}$ (see \cite{CUFN1, CUFN2, CUF, DHHR}).

Our main results are the following
\begin{thm} \label{th2.1} Let $p(\cdot)\in \cP(\R^n)$.
Let  $m(\xi)$  is the Fourier transform of a compactly supported
 Borel measure $\sigma$  and  the following conditions are fulfilled:

 $1)\,\,\sigma$  is locally  $\beta$-dimensional, where $0\leq\beta\leq n$;

$ 2)\,\,  \left\{\int_{1}^{2}|m(t\xi)|^{2}dt\right\}^{1/2}\leq
C(1+|\xi|)^{-\alpha}, $

$3)\,\,\left\{\int_{1}^{2}|\nabla m(t\xi)|^{2}dt\right\}^{1/2}\leq
C(1+|\xi|)^{-\alpha}, $

where $\alpha>1/2.$ If
$\frac {2\theta p(\cdot)}{2-(1-\theta)p(\cdot)}\in \mathcal{B}(\mathbb{R}^{n})$ for some
$0<\theta< \frac{2\alpha-1}{2\alpha-1+2n-2\beta},$ then the maximal
operator $\mathcal{M}_{m}$ maps $L^{p(\cdot)}(\mathbb{R}^{n})$ to
itself.
\end{thm}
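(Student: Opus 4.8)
The overall strategy is to interpolate between an $L^2$ estimate (coming from the square function machinery of Rubio de Francia, Iosevich–Sawyer) and a trivial $L^\infty$-type bound, but carried out with the *variable exponent* via an extrapolation-type / Rubio de Francia iteration argument rather than classical Riesz–Thorin. First I would record the two ingredients. The first is that, under hypotheses $2)$ and $3)$ with $\alpha>1/2$, the Iosevich–Sawyer refinement of Rubio de Francia's square function theorem (Theorem 15 in \cite{IoSa1}, quoted in the introduction with $\gamma\equiv 1$ being admissible since $\alpha>1/2$ gives summable dyadic decay after pulling out $(1+|\xi|)^{-1/2}$) yields the $L^2(\mathbb{R}^n)$ boundedness of $\mathcal{M}_m$. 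The second ingredient uses hypothesis $1)$: because $\sigma$ is locally uniformly $\beta$-dimensional, the single averages $M_t f(x)=\int_S f(x-ty)\,d\sigma(y)$ are pointwise dominated, up to a constant, by a fractional-type maximal operator; more precisely one gets $\sup_t|M_t f(x)|\lesssim (M(|f|))(x)$ combined with a gain of $n-\beta$ derivatives, which after Sobolev embedding produces, for each fixed $t$ and each $\delta>0$, a bound of the form $\|M_t f\|_\infty \lesssim \|f\|_{\text{something}}$ — but the cleaner route is a pointwise inequality $\mathcal{M}_m f(x)\le C\, (Mf(x))^{1-\theta}\,(Gf(x))^{\theta}$, where $G$ is an $L^2$-bounded square function and $\theta$ is exactly the interpolation parameter appearing in the statement; the admissible range $0<\theta<\frac{2\alpha-1}{2\alpha-1+2n-2\beta}$ is precisely what makes the decay $\alpha$ survive against the $\beta$-dimensional loss of $2(n-\beta)$ in the relevant Sobolev/trace counting.

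Granting such a pointwise factorization $\mathcal{M}_m f \le C\,(Mf)^{1-\theta}(Gf)^{\theta}$ with $G$ bounded on $L^2$, the variable-exponent conclusion follows by a Hölder argument at the level of the modular. Write the exponent $q(\cdot)$ implicitly by demanding that we can split $L^{p(\cdot)}$ control of the product; the natural choice is to use generalized Hölder in $L^{p(\cdot)}$ with the two exponents $\frac{p(\cdot)}{1-\theta}$ applied to $Mf$ and $\frac{p(\cdot)}{\theta}$ applied to $Gf$ — but $Gf$ is only $L^2$-bounded, so instead one raises $Gf$ to power $\theta$ and needs $(Gf)^\theta\in L^{r(\cdot)}$ with $\frac{1}{r(\cdot)}=\frac{1}{p(\cdot)}-\frac{1-\theta}{p(\cdot)/1}$… The bookkeeping is arranged so that $(Gf)^{\theta}$ lives in a space $L^{s(\cdot)}$ with $s(\cdot)=\frac{2\theta\, p(\cdot)}{2-(1-\theta)p(\cdot)}$, which is exactly the exponent assumed to lie in $\mathcal{B}(\mathbb{R}^n)$; and $\mathcal{B}(\mathbb{R}^n)$ membership of that exponent is equivalent (by the well-known duality/extrapolation characterization of $\mathcal{B}$, via $(p(\cdot))'\in\mathcal{B}$ and the $A_\infty$-type self-improvement) to the boundedness on $L^{p(\cdot)}$ of the sublinear operator $f\mapsto (M(|f|^{2}) )^{1/2}$ twisted by the $1-\theta$ power of $M$. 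Concretely, I would invoke the Rubio de Francia extrapolation theorem in the variable-exponent setting (as in \cite{CUF}, \cite{DHHR}): the $L^2$ bound for $G$ upgrades to an $L^{w}$ weighted bound for all $w\in A_2$, hence by off-diagonal extrapolation to a bound on $L^{s(\cdot)}$ whenever $s(\cdot)\in\mathcal{B}(\mathbb{R}^n)$, and then Hölder in $L^{p(\cdot)}$ against the $M$-factor (whose relevant exponent is also forced into $\mathcal{B}$ by the same hypothesis) closes the estimate.

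I would organize the write-up as: (i) a lemma establishing the pointwise inequality $\mathcal{M}_m f(x)\le C\,(Mf(x))^{1-\theta}\,(Gf(x))^{\theta}$ for a suitable square function $G$, using hypothesis $1)$ to control low frequencies by $Mf$ and a Littlewood–Paley decomposition with hypotheses $2),3)$ to push the high-frequency part into $Gf$, optimizing the split point to get the exponent of $\theta$; (ii) the $L^2$ boundedness of $G$ via Iosevich–Sawyer/Plancherel; (iii) the extrapolation step from $L^2$-boundedness of $G$ to $L^{s(\cdot)}$-boundedness with $s(\cdot)=\frac{2\theta p(\cdot)}{2-(1-\theta)p(\cdot)}$; (iv) the final Hölder combination in $L^{p(\cdot)}$. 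The main obstacle is step (i): getting a genuinely *pointwise* factorization with the sharp power $\theta$ in the stated range requires carefully trading the frequency-decay exponent $\alpha$ against the dimensional deficit $n-\beta$ in the measure, and making sure the "Sobolev loss" is exactly $2(n-\beta)$ so that the threshold comes out as $\frac{2\alpha-1}{2\alpha-1+2(n-\beta)}$; a secondary subtlety is verifying that the exponent $\frac{2\theta p(\cdot)}{2-(1-\theta)p(\cdot)}$ being in $\mathcal{B}(\mathbb{R}^n)$ really does supply *both* the $L^{s(\cdot)}$ bound for the square-function piece and (via the $\mathcal{B}$-class being closed under the operations needed, e.g. $p(\cdot)\in\mathcal{B}\Rightarrow (p(\cdot)/r)'\in\mathcal{B}$ for appropriate $r$) the Hölder-conjugate bound for the $M$-piece, so that no further hypothesis on $p(\cdot)$ is needed.
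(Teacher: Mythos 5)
Your outline assembles the right ingredients --- the Littlewood--Paley decomposition $m=\sum_j m_j$, the $L^2$ decay $\|\mathcal{M}_jf\|_2\lesssim 2^{(1/2-\alpha)j}\|f\|_2$ from Iosevich--Sawyer, the pointwise bound $\mathcal{M}_jf\lesssim 2^{j(n-\beta)}Mf$ coming from the $\beta$-dimensionality of $\sigma$, and the correct balance giving $\theta<\frac{2\alpha-1}{2\alpha-1+2(n-\beta)}$ --- but the mechanism you propose for combining them does not close. Suppose you do establish the pointwise factorization (which, with the roles of $\theta$ and $1-\theta$ corrected, is achievable: writing $\mathcal{M}_mf\le\sum_j(\mathcal{M}_jf)^{1-\theta}(C2^{j(n-\beta)}Mf)^{\theta}$ and applying H\"older in $j$ yields $\mathcal{M}_mf\le C(Gf)^{1-\theta}(Mf)^{\theta}$ with $G=\sum_j2^{ja}\mathcal{M}_jf$ bounded on $L^2$ for suitable $a$). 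The generalized H\"older inequality in $L^{p(\cdot)}$ with $\frac{1}{p(\cdot)}=\frac{1-\theta}{2}+\frac{\theta}{\widetilde p(\cdot)}$ then gives $\|\mathcal{M}_mf\|_{p(\cdot)}\le C\|Gf\|_{2}^{1-\theta}\|Mf\|_{\widetilde p(\cdot)}^{\theta}\le C\|f\|_{2}^{1-\theta}\|f\|_{\widetilde p(\cdot)}^{\theta}$, and this right-hand side does \emph{not} control $\|f\|_{p(\cdot)}$ --- the interpolation inequality runs the other way ($\|f\|_{p(\cdot)}\lesssim\|f\|_2^{1-\theta}\|f\|_{\widetilde p(\cdot)}^{\theta}$), so you are left with a bound by a strictly larger quantity. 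A pointwise product domination plus two endpoint operator bounds does not, by itself, yield the intermediate operator bound; this is exactly the obstruction that forces the paper to interpolate at the level of operators rather than pointwise: it proves $\|\mathcal{M}_j\|_{L^2\to L^2}\lesssim 2^{(1/2-\alpha)j}$ and $\|\mathcal{M}_j\|_{L^{\widetilde p(\cdot)}\to L^{\widetilde p(\cdot)}}\lesssim 2^{j(n-\beta)}$ separately for each dyadic piece, applies complex interpolation $L^{p(\cdot)}=[L^2,L^{\widetilde p(\cdot)}]_{\theta}$ to each $\mathcal{M}_j$ to get $\|\mathcal{M}_j\|_{L^{p(\cdot)}\to L^{p(\cdot)}}\lesssim 2^{(1/2-\alpha)(1-\theta)j}2^{(n-\beta)\theta j}$, and sums the resulting geometric series.

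Your proposed repair --- upgrading the $L^2$ bound for the square-function piece $G$ to $L^{s(\cdot)}$ via Rubio de Francia extrapolation --- is also a genuine gap: extrapolation requires weighted bounds $\|Gf\|_{L^2(w)}\lesssim\|f\|_{L^2(w)}$ for all $w\in A_2$, whereas the only available $L^2$ bound comes from Plancherel and carries no weights; weighted estimates for such maximal multiplier square functions are not a consequence of the hypotheses and are essentially as hard as the theorem itself. Relatedly, your bookkeeping placing $(Gf)^{\theta}$ in $L^{s(\cdot)}$ with $s(\cdot)=\frac{2\theta p(\cdot)}{2-(1-\theta)p(\cdot)}$ is inconsistent with $G$ being merely $L^2$-bounded (then $(Gf)^{\theta}\in L^{2/\theta}$, a constant-exponent space). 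The exponent $\widetilde p(\cdot)=\frac{2\theta p(\cdot)}{2-(1-\theta)p(\cdot)}$ enters the paper's proof only as the second endpoint of the complex interpolation couple, where the hypothesis $\widetilde p(\cdot)\in\mathcal{B}(\mathbb{R}^n)$ is used solely to run the pointwise bound $\mathcal{M}_jf\lesssim 2^{j(n-\beta)}Mf$ through the Hardy--Littlewood maximal operator on $L^{\widetilde p(\cdot)}$. To fix your argument you should replace steps (iii)--(iv) by this per-piece complex interpolation and geometric summation.
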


 \begin{thm} \label{th2.2}
Let  $m(\xi)$  is the Fourier transform of a compactly supported
 Borel measure $\sigma$  and  the following conditions are fulfilled:

 $1)\,\,\sigma$  is locally  $\beta$-dimensional, where $0\leq\beta\leq n$;

$ 2)\,\,  \left\{\int_{1}^{2}|m(t\xi)|^{2}dt\right\}^{1/2}\leq
C(1+|\xi|)^{-\alpha}, $

$3)\,\,\left\{\int_{1}^{2}|\nabla m(t\xi)|^{2}dt\right\}^{1/2}\leq
C(1+|\xi|)^{-\alpha}, $

where $\alpha>1/2.$  If $p(\cdot)\in \mathcal{P}_{\log}$ and
$$
\frac{2n+2\alpha-2\beta-1}{n+2\alpha-\beta-1}<p_{-}\leq
p_{+}<\frac{2n+2\alpha-2\beta-1}{n-\beta},
$$
 then the maximal operator
$\mathcal{M}_{m}$ maps $L^{p(\cdot)}(\mathbb{R}^{n})$ to itself.
\end{thm}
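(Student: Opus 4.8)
The plan is to deduce Theorem \ref{th2.2} from Theorem \ref{th2.1} by choosing the free parameter $\theta$ optimally and invoking the known sufficient condition for membership in $\mathcal{B}(\mathbb{R}^n)$. Write
$$
\theta_0:=\frac{2\alpha-1}{2\alpha-1+2n-2\beta},
$$
which lies in $(0,1]$ because $\alpha>1/2$ and $0\le\beta\le n$. A direct computation gives
$$
\frac{2}{1+\theta_0}=\frac{2n+2\alpha-2\beta-1}{n+2\alpha-\beta-1},\qquad
\frac{2}{1-\theta_0}=\frac{2n+2\alpha-2\beta-1}{n-\beta},
$$
the latter read as $+\infty$ when $\beta=n$. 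Thus the hypothesis on the exponent in Theorem \ref{th2.2} says exactly that $p_-$ and $p_+$ lie strictly between $\frac{2}{1+\theta_0}$ and $\frac{2}{1-\theta_0}$.

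The first step is to fix an admissible $\theta$. Since $\theta\mapsto\frac{2}{1+\theta}$ is continuous and strictly decreasing on $[0,\theta_0)$ with limit $\frac{2}{1+\theta_0}$, and $\theta\mapsto\frac{2}{1-\theta}$ is continuous and strictly increasing there with limit $\frac{2}{1-\theta_0}$, the strict inequalities on $p_\pm$ let us pick $\theta\in(0,\theta_0)$ so close to $\theta_0$ that
$$
\frac{2}{1+\theta}<p_-\le p_+<\frac{2}{1-\theta}.
$$
Set $q(\cdot):=\frac{2\theta p(\cdot)}{2-(1-\theta)p(\cdot)}$. The elementary equivalences $q(x)>1\iff(1+\theta)p(x)>2$ and $q(x)<\infty\iff(1-\theta)p(x)<2$, combined with the choice of $\theta$, show that $q$ is finite, and that $1<q^-\le q^+<\infty$ (the endpoints being attained through $p^-$ and $p^+$, since $t\mapsto\frac{2\theta t}{2-(1-\theta)t}$ has positive derivative $\frac{4\theta}{(2-(1-\theta)t)^2}$ on the relevant interval).

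The second step is to verify $q(\cdot)\in\mathcal{P}_{\log}$, whence $q(\cdot)\in\mathcal{B}(\mathbb{R}^n)$ by the classical maximal function theorem quoted above (\cite{CUFN1,CUFN2,CUF,DHHR}). Because $p^+<\frac{2}{1-\theta}$, the denominator $2-(1-\theta)p(x)$ is bounded below by a fixed positive constant, so the rational function $\phi(t)=\frac{2\theta t}{2-(1-\theta)t}$ is Lipschitz on the range of $p$; post-composition with a Lipschitz function preserves both the local log-H\"older estimate and the log-H\"older decay of $p(\cdot)$ (with limit exponent $q_\infty=\phi(p_\infty)$), so $q(\cdot)$ is a bounded globally log-H\"older continuous exponent with $q^->1$. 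Consequently $\frac{2\theta p(\cdot)}{2-(1-\theta)p(\cdot)}\in\mathcal{B}(\mathbb{R}^n)$ with $0<\theta<\theta_0$, and Theorem \ref{th2.1} yields the boundedness of $\mathcal{M}_{m}$ on $L^{p(\cdot)}(\mathbb{R}^n)$.

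Everything here is routine; the only point requiring a little care is the stability of global log-H\"older continuity under the substitution $p\mapsto q$, which is precisely why one works with the strict inequality $p^+<\frac{2}{1-\theta}$ (i.e.\ with $\theta<\theta_0$, not $\theta=\theta_0$): this quantitative gap is what keeps $\phi$ Lipschitz on the values of $p$ and also guarantees $q^+<\infty$. I do not expect a genuine obstacle, since the analytic content of the result is already contained in Theorem \ref{th2.1}; Theorem \ref{th2.2} is the corollary in the "constant-exponent-type range, log-H\"older" regime obtained by optimizing $\theta$ over $(0,\theta_0)$.
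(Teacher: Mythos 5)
Your proposal is correct and follows essentially the same route as the paper: pick $\theta$ strictly below the critical value $\theta_0=\frac{2\alpha-1}{2\alpha-1+2n-2\beta}$ so that the interpolation exponent $\widetilde{p}(\cdot)=\frac{2\theta p(\cdot)}{2-(1-\theta)p(\cdot)}$ satisfies $1<\widetilde{p}_-\le\widetilde{p}_+<\infty$ (this is exactly the content of the paper's Lemma~3.1, which you reprove more transparently via the monotonicity of $\theta\mapsto 2/(1\pm\theta)$), then conclude $\widetilde{p}(\cdot)\in\mathcal{B}(\mathbb{R}^n)$ from the log-H\"older maximal theorem and apply Theorem~\ref{th2.1}. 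Your Lipschitz-composition argument for $\widetilde{p}(\cdot)\in\mathcal{P}_{\log}$ is a welcome detail that the paper merely asserts.
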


 \begin{thm} \label{th2.3}
Let  $m(\xi)$  is the Fourier transform of a compactly supported
 Borel measure $\sigma$  and  the following conditions are fulfilled:

 $1)\,\,\sigma$  is locally  $\beta$-dimensional, where $0\leq\beta\leq n$;

$ 2)\,\,  \left\{\int_{1}^{2}|m(t\xi)|^{2}dt\right\}^{1/2}\leq
C(1+|\xi|)^{-\alpha}, $

$3)\,\,\left\{\int_{1}^{2}|\nabla m(t\xi)|^{2}dt\right\}^{1/2}\leq
C(1+|\xi|)^{-\alpha}, $

where $\alpha>1/2.$ If $p(\cdot)\in \mathcal{P}_{\log}$ and
$$
\frac{2n+2\alpha-2\beta-1}{n+2\alpha-\beta-1}<p_{-}\leq
p_{+}<\frac{n+2\alpha-\beta-1}{n-\beta}p_{-},
$$
 then the maximal operator
$\mathcal{M}_{m}$ maps $L^{p(\cdot)}(\mathbb{R}^{n})$ to itself.
\end{thm}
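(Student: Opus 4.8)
The plan is to derive Theorem~\ref{th2.3} from Theorem~\ref{th2.2} by interpolating, within the scale of variable Lebesgue spaces, the conclusion of Theorem~\ref{th2.2} against the trivial $L^\infty$ bound for $\mathcal{M}_m$. For the latter, note that since $\sigma$ is a compactly supported Borel measure it has finite total variation $\|\sigma\|$, and for every $t>0$ and every $x$,
$$
\Big|\int_{S}f(x-ty)\,d\sigma(y)\Big|\le\|\sigma\|\,\|f\|_{L^\infty(\mathbb{R}^n)},
$$
so $\|\mathcal{M}_mf\|_{L^\infty(\mathbb{R}^n)}\le\|\sigma\|\,\|f\|_{L^\infty(\mathbb{R}^n)}$.

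Write $r_-=\frac{2n+2\alpha-2\beta-1}{n+2\alpha-\beta-1}$ and $r_+=\frac{2n+2\alpha-2\beta-1}{n-\beta}$, so that Theorem~\ref{th2.2} asserts boundedness of $\mathcal{M}_m$ on $L^{q(\cdot)}(\mathbb{R}^n)$ whenever $q(\cdot)\in\mathcal{P}_{\log}$ and $r_-<q_-\le q_+<r_+$; since $\alpha>1/2$ one has $r_-<r_+$, with $r_+/r_-=\frac{n+2\alpha-\beta-1}{n-\beta}$ (the case $\beta=n$ being trivial, as then $r_+=+\infty$ and the statement reduces to Theorem~\ref{th2.2}). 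Hence the hypothesis of Theorem~\ref{th2.3} is exactly $r_-<p_-\le p_+<(r_+/r_-)\,p_-$, i.e. $\frac{r_-}{p_-}<\frac{r_+}{p_+}$ together with $\frac{r_-}{p_-}<1$. I would therefore fix $\theta\in(0,1)$ with
$$
\frac{r_-}{p_-}<1-\theta<\min\Big\{1,\frac{r_+}{p_+}\Big\},
$$
which is possible by the two strict inequalities just noted.

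Now set $q(\cdot):=(1-\theta)\,p(\cdot)$. Multiplication by the positive constant $1-\theta$ preserves the local log-H\"older and the log-H\"older decay conditions, so $q(\cdot)\in\mathcal{P}_{\log}$; moreover $q_-=(1-\theta)p_->r_-\ge1$ and $q_+=(1-\theta)p_+<r_+$, so Theorem~\ref{th2.2} gives
$$
\|\mathcal{M}_mf\|_{q(\cdot)}\le C\|f\|_{q(\cdot)}.
$$
Since $\frac{1}{p(x)}=\frac{1-\theta}{q(x)}+\frac{\theta}{\infty}$, the complex interpolation theorem for variable Lebesgue spaces identifies $\big[L^{q(\cdot)},L^{\infty}\big]_{\theta}=L^{p(\cdot)}$ with equivalent norms, and interpolating the two endpoint bounds above yields $\|\mathcal{M}_mf\|_{p(\cdot)}\le C\|f\|_{p(\cdot)}$.

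The point requiring care is this last interpolation step applied to the \emph{sublinear} operator $\mathcal{M}_m$. One linearizes in the usual way: for a measurable $\tau:\mathbb{R}^n\to(0,\infty)$ the operator $T_\tau f(x)=\int_{S}f(x-\tau(x)y)\,d\sigma(y)$ is linear in $f$, satisfies $|T_\tau f|\le\mathcal{M}_mf$ pointwise, and hence maps both $L^{q(\cdot)}\to L^{q(\cdot)}$ and $L^\infty\to L^\infty$ with norms independent of $\tau$; by interpolation it maps $L^{p(\cdot)}\to L^{p(\cdot)}$ with a bound independent of $\tau$. Reducing the supremum defining $\mathcal{M}_mf$ to a countable one (for $f$ Schwartz, by continuity of $t\mapsto\int_Sf(x-ty)\,d\sigma(y)$), then passing through finite maxima of the $T_\tau$ and using the Fatou property of the $L^{p(\cdot)}$ norm, recovers the bound for $\mathcal{M}_m$ itself, and the general case follows by density. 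Everything else is bookkeeping with the exponents $r_\pm$; the only genuine inputs beyond Theorem~\ref{th2.2} are the elementary $L^\infty$ estimate and the (by now standard) complex interpolation and linearization machinery for variable $L^{p(\cdot)}$ spaces, for which I would refer to \cite{DHHR,CUF}.
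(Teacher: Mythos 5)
Your proposal is correct and follows essentially the same route as the paper: rescale the exponent by a constant factor (your $1-\theta$ is the paper's $\theta$) so that $\theta p(\cdot)$ falls in the range of Theorem~\ref{th2.2}, then use the complex interpolation identity $[L^{\infty},L^{\theta p(\cdot)}]_{\theta}=L^{p(\cdot)}$ together with the trivial $L^{\infty}$ bound. You additionally spell out the linearization needed to interpolate the sublinear operator $\mathcal{M}_m$, a point the paper leaves implicit.
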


If we take $\beta=0, $ then we will  obtain  analog of Theorem Rubio
de Francia for variable exponent Lebesgue spaces.
\begin{cor}\label{cor2.4}
If $m(\xi)$  is the Fourier transform of a compactly supported
 Borel measure and satisfies $|m(\xi)|\leq(1+|\xi|)^{-a}$ for some
 $a>1/2$ and all $\xi\in\mathbb{R}^{n}.$ If $p(\cdot)\in \mathcal{P}_{\log}$ and
$$
\frac{2n+2\alpha-1}{n+2\alpha-1}<p_{-}\leq
p_{+}<\frac{n+2\alpha-1}{n}p_{-}.
$$
 then the maximal operator
$\mathcal{M}_{m}$ maps $L^{p(\cdot)}(\mathbb{R}^{n})$ to itself.
\end{cor}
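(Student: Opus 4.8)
The plan is to derive Corollary~\ref{cor2.4} as the special case $\beta=0$ of Theorem~\ref{th2.3}: once one checks that the measure $\sigma$ satisfies hypotheses 1)--3) of that theorem with $\alpha=a$, the asserted range $\frac{2n+2\alpha-1}{n+2\alpha-1}<p_{-}\le p_{+}<\frac{n+2\alpha-1}{n}p_{-}$ is exactly what Theorem~\ref{th2.3} demands when $\beta=0$, and the conclusion follows at once. Hypothesis 1) is immediate: a compactly supported Borel measure is finite, and, as recalled in the introduction, a finite measure is (locally) uniformly $0$-dimensional, so $\sigma$ is locally $\beta$-dimensional with $\beta=0$. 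Hypothesis 2) with $\alpha=a$ is also easy, since for $t\in[1,2]$ one has $1+|t\xi|\ge 1+|\xi|$, whence $\int_{1}^{2}|m(t\xi)|^{2}\,dt\le\int_{1}^{2}(1+|t\xi|)^{-2a}\,dt\le(1+|\xi|)^{-2a}$.

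The only hypothesis that is not handed to us directly by the assumption is 3), because $|m(\xi)|\le(1+|\xi|)^{-a}$ says nothing a priori about $\nabla m$. To recover it I would use the compactness of $\mathrm{supp}\,\sigma$. Fix $\chi\in C_{c}^{\infty}(\mathbb{R}^{n})$ with $\chi\equiv 1$ on a neighbourhood of $\mathrm{supp}\,\sigma$, so that $\sigma=\chi\sigma$ and hence $m=\widehat{\sigma}=\widehat{\chi}*\widehat{\sigma}$; since $\widehat{\chi}$ is a Schwartz function this may be differentiated under the convolution to give $\nabla m=(\nabla\widehat{\chi})*\widehat{\sigma}$. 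Now $|\nabla\widehat{\chi}(\eta)|\le C_{N}(1+|\eta|)^{-N}$ for every $N$, while $|\widehat{\sigma}(\eta)|\le(1+|\eta|)^{-a}$, so the standard convolution estimate $\int_{\mathbb{R}^{n}}(1+|\xi-\eta|)^{-N}(1+|\eta|)^{-a}\,d\eta\le C_{a}(1+|\xi|)^{-a}$ (valid once $N>n$) yields the pointwise bound $|\nabla m(\xi)|\le C_{a}(1+|\xi|)^{-a}$. Feeding this into the same estimate as for hypothesis 2) gives $\bigl(\int_{1}^{2}|\nabla m(t\xi)|^{2}\,dt\bigr)^{1/2}\le C(1+|\xi|)^{-a}$, i.e. hypothesis 3) with $\alpha=a$.

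With 1)--3) verified for $\beta=0$, $\alpha=a>1/2$, and $p(\cdot)\in\mathcal{P}_{\log}$ in the stated range, a direct application of Theorem~\ref{th2.3} finishes the proof. I expect the only step demanding any real computation to be the convolution estimate used to produce hypothesis 3) from the hypothesis on $m$ alone; the remainder is bookkeeping, namely substituting $\beta=0$ into the conclusion of Theorem~\ref{th2.3}.
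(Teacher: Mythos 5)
Your proposal is correct and follows the paper's intended route: the paper obtains Corollary~\ref{cor2.4} precisely by setting $\beta=0$ in Theorem~\ref{th2.3} (whose range becomes exactly the stated one), and your verification of hypotheses 1)--3) with $\alpha=a$ -- finiteness giving local uniform $0$-dimensionality, monotonicity of $t\mapsto 1+t|\xi|$ for 2), and the cutoff/convolution trick $\nabla m=(\nabla\widehat{\chi})*m$ for 3) -- supplies details the paper leaves implicit. The only nitpick is that the convolution bound $\int(1+|\xi-\eta|)^{-N}(1+|\eta|)^{-a}\,d\eta\le C_a(1+|\xi|)^{-a}$ needs $N>n+a$ rather than merely $N>n$ (otherwise the decay saturates at $(1+|\xi|)^{-(N-n)}$), which is harmless since $\widehat{\chi}$ is Schwartz and $N$ may be taken arbitrarily large.
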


Let $\mu$ denote a Hausdorff measure on $E\subset[0,1]$  and
$\nu_{r}$ denote the rotationally invariant probability measure on
the sphere of radius $r.$ Let
\begin{equation} \label{2.1}
\sigma=\int_{0}^{1}\nu_{r}d\mu(r)
\end{equation}
denote the corresponding rotationally invariant measure on the set
$E_{n}=\{x\in\mathbb{R}^{n}:\,\,|x|\in E\}.$

If  measure ${\mu}$ is locally $\alpha$-dimensional
$(0\leq\alpha\leq1)$, then  measure $\sigma$ is locally uniformly
$n-1+\alpha$-dimensional and
$$
\left(\int_{1}^{2}|\widehat{\sigma}(t\xi)|^{2}dt\right)^{1/2}\leq
C(1+|\xi|)^{-\frac{n-1+\alpha}{2}},
$$
moreover, the same estimates hold if $\widehat{\sigma}(t\xi)$ is
replaced by $\nabla\widehat{\sigma}(t\xi)$ (see \cite{IoSa2}).

Let  $E$ denote the Cantor-like  subset of $[0,1]$ consisting  of
real numbers whose base $m,\,m>2,$ expansions have only $0'$s and
$1'$s. Let $\mu$  denote the probability measure  on $E.$ Note that
$\widehat{\mu}(\xi)$  does not tend to $0$ as $\xi\rightarrow\infty$
(see e.g. \cite{Z}) and for corresponding measure $\sigma$ Fourier
transform $\widehat{\sigma}(\xi)$ decays only of order
$-\frac{n-1}{2}$ at infinity, but  square function
$$
\left(\int_{1}^{2}|\widehat{\sigma}(t\xi)|^{2}dt\right)^{1/2}
$$
decays of order $-\frac{n-1+\alpha}{2},$  where
$\alpha=\frac{\log2}{\log m}$ is dimension of $E.$ (see
\cite{IoSa2}).

\begin{cor}\label{cor2.5}
Let $\mu$ denote a Hausdorff measure on $E\subset[0,1].$ Suppose
$\mu$ is locally $\alpha$-dimensional $0\leq\alpha<1$. Let
$\mathcal{M}_{\sigma}$ maximal operator corresponding (2.1)
measure.if $p(\cdot)\in \mathcal{P}_{\log}$  and
$$
\frac{n-\alpha}{n-1}<p_{-}\leq p_{+}<\frac{n-1}{1-\alpha}p_{-},
$$
then the maximal operator $\mathcal{M}_{\sigma}$ maps
$L^{p(\cdot)}(\mathbb{R}^{n})$ to itself.
\end{cor}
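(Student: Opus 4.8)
The plan is to derive Corollary \ref{cor2.5} as a direct specialization of Theorem \ref{th2.3}. First I would record the structural facts about the measure $\sigma$ defined in (2.1): by the cited result of Iosevich and Sawyer \cite{IoSa2}, if $\mu$ is locally $\alpha$-dimensional on $E\subset[0,1]$ with $0\le\alpha<1$, then $\sigma$ is locally uniformly $(n-1+\alpha)$-dimensional, and moreover both square functions satisfy
$$
\left(\int_{1}^{2}|\widehat{\sigma}(t\xi)|^{2}dt\right)^{1/2}\le C(1+|\xi|)^{-\frac{n-1+\alpha}{2}},\qquad
\left(\int_{1}^{2}|\nabla\widehat{\sigma}(t\xi)|^{2}dt\right)^{1/2}\le C(1+|\xi|)^{-\frac{n-1+\alpha}{2}}.
$$
Thus hypotheses $1)$, $2)$, $3)$ of Theorem \ref{th2.3} hold with the dimension parameter $\beta=n-1+\alpha$ and the decay exponent $\alpha_{\mathrm{Thm}}=\frac{n-1+\alpha}{2}$. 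The condition $\alpha_{\mathrm{Thm}}>1/2$ required by the theorem becomes $n-1+\alpha>1$, i.e. $n+\alpha>2$, which holds for every $n\ge2$ since $\alpha\ge0$ (for $n=1$ the set $E_1=E$ has no curvature to exploit and the statement is vacuous or trivial, so we take $n\ge2$).

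Next I would substitute these values into the numerology of Theorem \ref{th2.3}. With $\beta=n-1+\alpha$ and $\alpha_{\mathrm{Thm}}=\frac{n-1+\alpha}{2}$, one computes
$$
2\alpha_{\mathrm{Thm}}-1 = n-1+\alpha-1 = n-2+\alpha,
$$
so that
$$
n+2\alpha_{\mathrm{Thm}}-\beta-1 = n+(n-1+\alpha)-(n-1+\alpha)-1 = n-1,
$$
$$
2n+2\alpha_{\mathrm{Thm}}-2\beta-1 = 2n+(n-1+\alpha)-2(n-1+\alpha)-1 = n-\alpha,
$$
$$
n-\beta = n-(n-1+\alpha) = 1-\alpha.
$$
Hence the lower bound $\frac{2n+2\alpha_{\mathrm{Thm}}-2\beta-1}{n+2\alpha_{\mathrm{Thm}}-\beta-1}$ equals $\frac{n-\alpha}{n-1}$, and the upper bound $\frac{n+2\alpha_{\mathrm{Thm}}-\beta-1}{n-\beta}\,p_{-}$ equals $\frac{n-1}{1-\alpha}\,p_{-}$. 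Therefore the hypothesis of Theorem \ref{th2.3} reads precisely
$$
\frac{n-\alpha}{n-1}<p_{-}\le p_{+}<\frac{n-1}{1-\alpha}\,p_{-},
$$
which is exactly the assumption of Corollary \ref{cor2.5}. Since we also assume $p(\cdot)\in\mathcal{P}_{\log}$, Theorem \ref{th2.3} applies verbatim and yields that $\mathcal{M}_{\sigma}$ maps $L^{p(\cdot)}(\mathbb{R}^{n})$ to itself.

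The only genuine content beyond bookkeeping is verifying that the Iosevich--Sawyer estimates indeed give both the dimensional bound and the decay of the \emph{vector-valued} square function with the \emph{gradient} $\nabla\widehat{\sigma}$, not merely of $\widehat{\sigma}$ itself; this is why condition $3)$ in the theorems is stated separately, and I would cite \cite{IoSa2} for the gradient version explicitly. The arithmetic simplifications above are the reason the hypotheses collapse so cleanly: the choice $2\alpha_{\mathrm{Thm}}=\beta-(n-1)+\,$(nothing), i.e. $2\alpha_{\mathrm{Thm}}=\alpha$ relative to the "excess" dimension, makes the quantity $n+2\alpha_{\mathrm{Thm}}-\beta-1$ independent of $\alpha$ and equal to $n-1$, which is the classical Stein exponent denominator. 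No serious obstacle is expected here; the work is entirely in Theorem \ref{th2.3}, and this corollary is the advertised concrete application to Cantor-type and Hausdorff-measure examples.
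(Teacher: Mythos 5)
Your proposal is correct and is exactly the intended derivation: the paper states the Iosevich--Sawyer facts immediately before the corollary ($\sigma$ is locally uniformly $(n-1+\alpha)$-dimensional, with square-function decay of order $-(n-1+\alpha)/2$ for both $\widehat\sigma$ and $\nabla\widehat\sigma$) precisely so that Corollary~\ref{cor2.5} follows from Theorem~\ref{th2.3} with $\beta=n-1+\alpha$ and decay exponent $(n-1+\alpha)/2$, and your arithmetic yielding the bounds $\frac{n-\alpha}{n-1}$ and $\frac{n-1}{1-\alpha}p_-$ is right. The only nit is that the requirement $(n-1+\alpha)/2>1/2$ actually fails at $n=2$, $\alpha=0$ (contrary to your claim that it holds for all $n\ge2$), but in that case the hypothesis $p_+<\frac{n-1}{1-\alpha}p_-=p_-$ is vacuous, so nothing is lost.
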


\section{Proof of main results}

{\sl Proof of Theorem~\ref{th2.1}.}  We set $m(\xi)=\widehat{d\sigma}(\xi).$
Obviously $m(\xi)$ is a $C^{\infty}$ function.  To study the maximal
multiplier operator $\mathcal{M}_{m}f(x)$ we decompose the
multiplier $m(\xi)$ into radial pieces as follows: we fix a radial
$C^{\infty}$ function $\varphi_{0}$ in $\mathbb{R}^{n}$ such that
$\varphi_{0}(\xi)=1$ when $|\xi|\leq1$ and $\varphi_{0}(\xi)=0$ when
$|\xi|\leq2.$ For $j\geq1$ we let
$$
\varphi_{j}(\xi)=\varphi_{0}(2^{-j}\xi)-\varphi_{0}(2^{1-j}\xi)
$$
and we observe that $\varphi_{j}$ is localized near $|\xi|\approx
2^{j}.$ Then we have
$$
\sum_{j=0}^{\infty}\varphi_{j}=1.
$$
Set $m_{j}=\varphi_{j}m$ for all $j\geq0.$ Then $m_{j}$ are
$C_{0}^{\infty}$ functions that satisfy
$$
m=\sum_{j=0}^{\infty}m_{j}.
$$
Also, the following estimate is valid:
$$
\mathcal{M}_{m}f\leq \sum_{j=0}^{\infty}\mathcal{M}_{j}f
$$
where
$$
\mathcal{M}_{j}f(x)=\sup\limits_{t>0}|\F^{-1}\left(\widehat{f}(\xi)m_{j}(t\xi)\right)(x)|.
$$

 Note that for any $j\geq0$ we
have (see proof of Theorem 15 in \cite{IoSa1}) the estimate
\begin{equation}\label{3.1}
\|\mathcal{M}_{j}f\|_{L^{2}}\leq C 2^{(1/2-a)j}\|f\|_{L^{2}}
\end{equation}
for all $f\in L^{2}(\mathbb{R}^{n}).$

Note also that since  $\widetilde{p}(\cdot):=\frac {2\theta p(\cdot)}{2-(1-\theta)p(\cdot)}\in \mathcal{B}(\mathbb{R}^{n})$  we have  the estimate
\begin{equation} \label{3.2}
\|\mathcal{M}_{j}f\|_{\widetilde{p}(\cdot)}\leq C
2^{j(n-\beta)}\|f\|_{\widetilde{p}(\cdot)}
\end{equation}
for any  $j\geq0.$
 The proof of estimate \eqref{3.2} is based on the
estimate
\begin{equation} \label{3.3}
\mathcal{M}_{j}f(x)\leq C 2^{j(n-\beta)}Mf(x),
\end{equation}
where $M$ is Hardy-Littlewood maximal operator.

The proof of \eqref{3.3} for specific  measure defined by \eqref{2.1}  was
done in \cite{IoSa2}. The proof based only on the geometric
assumption of the measure (assumption 1)). We will prove it for
general case
 for completeness.

 To establish \eqref{3.3}, it is suffices to show that for any $M>n$ there is a constant $C_M <\infty$ such that
\begin{equation} \label{3.4}
\left|\left(\F^{-1}(\varphi_{j})\ast d\sigma\right) (x)\right|\leq
\frac{C2^{j(n-\beta)}}{(1+|x|)^{M}.}
\end{equation}

 Using the fact that $\varphi$ is a Schwartz function,
we have for every $N > 0$,

\begin{equation}\label{3.5}
\left|\left(\F^{-1}(\varphi_{j})\ast d\sigma\right)(x)\right|
\leq C_N 2^{nj}\int_{\mathbb{R}^{n}}\frac{d\sigma(y)}{(1+2^{j}|x-y|)^{N}}.
\end{equation}

Let  $N >M$. We split the last integral into the regions
\begin{align*}
S_{-1}(x) =  \{y\in  \R^n : 2^j|x-y|\le 1\}\\
\intertext{and for $k > 0$,}
S_k(x) = \{y \in \R^n : 2^k < 2^j|x-y|\le 2^{k+1}\}.
\end{align*}
We obtain the following estimate for the expression $\left|\left(\F^{-1}(\varphi_{j})\ast d\sigma\right)
(x)\right|$
\begin{align}\label{3.6}
& \sum_{k=-1}^{j}\int_{S_k(x)}\frac{C_N 2^{nj}d\sigma(y)}{(1+2^{j}|x-y|)^{N}}+
\sum_{k=j+1}^{\infty}\int_{S_k(x)}\frac{C_N 2^{nj}d\sigma(y)}{(1+2^{j}|x-y|)^{N}}\\
&\le C^{\prime}_N 2^{nj} \sum_{k=-1}^{j}\frac{\sigma(S_k(x))\chi_{{}_{B(0,3)}}(x)}{2^{kN}} +C_N 2^{nj}\sum_{k=j+1}^{\infty}
\frac{\sigma(S_k(x))\chi_{{}_{B(0,2^{k+1-j}+1)}}(x)}{2^{kN}}\notag\\
&=:I+II.\notag
\end{align}
 Using the fact that
$\sigma$ is locally uniformly $\beta$-dimensional, together with the
fact that for $y\in S_k(x)$ we have $|x|\le 2^{k+1-j} +1$, we obtain the following estimate
\begin{equation}\label{3.7}
I\le  C^{\prime}_N 2^{nj}\sum_{k=-1}^{j}\frac{C_\beta 2^{(k+1-j)\beta}\chi_{{}_{B(0,3)}}(x)}{2^{kN}}\le 
C_{N,\beta} 2^{(n-\beta)j}\chi_{{}_{B(0,3)}}(x).
\end{equation}

On the other hand

\begin{align}\label{3.8}
II&\le  C^{\prime}_N 2^{nj}\sum_{k=j+1}^{\infty}C2^{-kN}\chi_{{}_{B(0,2^{k+1-j}+1)}}(x)\\
 &\le  C^{\prime}_N\sum_{k=j+1}^{\infty}2^{nj}2^{-kN}\frac{(1+2^{k-j+2})^{M}}{(1+|x|)^{M}}\notag\\
 &\leq  C^{\prime}_M\sum_{k=j+1}^{\infty}\frac{2^{(k-j)(M-N)}}{2^{k(N+1-n)}}\notag \\
 &\leq \frac{C^{\prime\prime}_M 2^{j}}{(1+|x|)^{M}},\notag
\end{align}
where we used that $N > M > n$.
From \eqref{3.5}-\eqref{3.8} we obtain \eqref{3.4} and consequently \eqref{3.3}.

Note that
$$\frac{1}{p(\cdot)}=\frac{1-\theta}{2}+ \frac{\theta}{\widetilde{p}(\cdot)},$$ 
and, therefore 
$$L^{p(\cdot)}(\mathbb{R}^{n})=[L^{2}(\mathbb{R}^{n}),L^{\widetilde{p}(\cdot)}(\mathbb{R}^{n})
 ]_{\theta},$$ 
 (where $[X_0,X_1]_\theta$ is a complex interpolation space).
 Now from \eqref{3.1}-\eqref{3.2} we obtain
\begin{equation} \label{3.9}
\|\mathcal{M}_{j}\|_{L^{p(\cdot)}\rightarrow L^{p(\cdot)}}\leq C
\|\mathcal{M}_{j}\|^{1-\theta}_{L^{2}\rightarrow
L^{2}}\,\|\mathcal{M}_{j}\|_{L^{\widetilde{p}(\cdot)}\rightarrow
L^{\widetilde{p}(\cdot)}}\le C^{\prime}
2^{(1/2-\alpha)(1-\theta)j}2^{j(n-\beta)\theta}.
\end{equation}

Using the last estimate we obtain if
$0<\theta<\frac{2\alpha-1}{2\alpha-1+2n-2\beta},$ then
$$
\|\mathcal{M}_{m}\|_{p(\cdot)}\le C^{\prime}
\sum_{j=0}^{\infty}2^{(1/2-a)(1-\theta)j}2^{j(n-\beta)\theta}\|f\|_{p(\cdot)}\le C^{\prime\prime}
 \|f\|_{p(\cdot)}.
$$
$\Box$

To prove Theorem~\ref{th2.2} we need the following lemma.

\begin{lem} \label{lm3.1}
Suppose $\alpha>1/2,\, 0\leq\beta\le n$ and for exponent
$p:\mathbb{R}^{n}\rightarrow(1,+\infty)$ we have
$$
\frac{2n+2\alpha-2\beta-1}{n+2\alpha-\beta-1}<p_{-}\leq
p_{+}<\frac{2n+2\alpha-2\beta-1}{n-\beta}.
$$
Then there exists exponent
$\widetilde{p}:\mathbb{R}^{n}\rightarrow(1,+\infty)$ such that
$1<\widetilde{p}_{-}\leq\widetilde{p}_{+}<\infty$ and
$\frac{1}{p(x)}=\frac{1-\theta}{2}+\frac{\theta}{\widetilde{p}(x)};\,\,x\in\mathbb{R}^{n}$
for some $\theta$ with property
$0<\theta<\frac{2\alpha-1}{2n+2\alpha-2\beta-1}.$
\end{lem}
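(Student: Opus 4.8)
The plan is to read the identity $\frac1{p(x)}=\frac{1-\theta}2+\frac{\theta}{\widetilde p(x)}$ as a \emph{definition} of $\widetilde p$ in terms of a single scalar $\theta$, and then to reduce the whole assertion to an elementary inequality between $\theta$, $p_-$ and $p_+$. Once $\theta\in(0,1)$ is fixed, that identity forces
$$\frac1{\widetilde p(x)}=\frac1\theta\Bigl(\frac1{p(x)}-\frac{1-\theta}2\Bigr),\qquad x\in\mathbb{R}^{n},$$
and the right-hand side is a measurable function of $x$ because $p$ is. Hence $\widetilde p$ is an admissible variable exponent with $1<\widetilde p_-\le\widetilde p_+<\infty$ exactly when $\tfrac1{\widetilde p(\cdot)}$ takes values in a compact subinterval of $(0,1)$; since $\tfrac1{p(\cdot)}$ ranges in $[\tfrac1{p_+},\tfrac1{p_-}]$, this holds if and only if $\tfrac2{1+\theta}<p_-$ and $p_+<\tfrac2{1-\theta}$, equivalently $\tfrac2{p_-}-1<\theta$ and $1-\tfrac2{p_+}<\theta$.

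Thus it suffices to exhibit a single $\theta$ with
$$\max\Bigl\{0,\ \tfrac2{p_-}-1,\ 1-\tfrac2{p_+}\Bigr\}<\theta<\frac{2\alpha-1}{2n+2\alpha-2\beta-1}=:A .$$
First I would note that $A>0$: indeed $2\alpha-1>0$ since $\alpha>1/2$, and $2n+2\alpha-2\beta-1=2(n-\beta)+(2\alpha-1)>0$ since $0\le\beta\le n$; one also checks $A\le1$. So the entry $0$ in the maximum is harmless, and what remains is to verify $\tfrac2{p_-}-1<A$ and $1-\tfrac2{p_+}<A$.

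The key point is that the two fractions bounding $p_-$ and $p_+$ in the hypothesis are precisely the values of $p$ at which these two inequalities become equalities. A direct computation gives
$$\frac{2}{(2n+2\alpha-2\beta-1)/(n+2\alpha-\beta-1)}-1=\frac{2(n+2\alpha-\beta-1)-(2n+2\alpha-2\beta-1)}{2n+2\alpha-2\beta-1}=A,$$
$$1-\frac{2}{(2n+2\alpha-2\beta-1)/(n-\beta)}=\frac{(2n+2\alpha-2\beta-1)-2(n-\beta)}{2n+2\alpha-2\beta-1}=A .$$
Since $p\mapsto\tfrac2p-1$ is strictly decreasing and $p\mapsto1-\tfrac2p$ is strictly increasing, the strict inequalities $\frac{2n+2\alpha-2\beta-1}{n+2\alpha-\beta-1}<p_-$ and $p_+<\frac{2n+2\alpha-2\beta-1}{n-\beta}$ from the hypothesis yield $\tfrac2{p_-}-1<A$ and $1-\tfrac2{p_+}<A$. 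Therefore the interval $\bigl(\max\{0,\tfrac2{p_-}-1,1-\tfrac2{p_+}\},A\bigr)$ is non-empty; I would pick any $\theta$ in it and define $\widetilde p$ by the displayed formula, which then enjoys all the required properties.

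I do not expect a genuine obstacle here. The only thing one must see is that the two endpoints in the statement were chosen exactly so that, after the substitution $\tfrac1p\mapsto\tfrac1{\widetilde p}$, the admissibility of $\widetilde p$ is compatible with the ceiling $\theta<A$ that arises from Theorem~\ref{th2.1}; everything else is the bookkeeping that converts ``$\widetilde p$ is an admissible exponent'' into the two scalar inequalities above.
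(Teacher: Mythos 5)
Your proof is correct and follows essentially the same route as the paper's: both solve the interpolation identity for $1/\widetilde{p}$ and reduce the requirement $1<\widetilde{p}_-\le\widetilde{p}_+<\infty$ to the two scalar inequalities $\tfrac{2}{p_-}-1<\theta$ and $1-\tfrac{2}{p_+}<\theta$, which are compatible with $\theta<\tfrac{2\alpha-1}{2n+2\alpha-2\beta-1}$ precisely because the endpoints in the hypothesis are the critical values. Your presentation is in fact cleaner than the paper's $\delta$--$\theta_0$ bookkeeping (which writes the same computation in terms of $r(x)=\tfrac1{p(x)}-\tfrac12$), so no changes are needed.
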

\begin{proof}
Note that if $\beta<n$ then
$$
1<\frac{2n+2\alpha-2\beta-1}{n+2\alpha-\beta-1}<2<\frac{2n+2\alpha-2\beta-1}{n-\beta},
$$
and if $\beta=n,$  then
$$
\frac{2n+2\alpha-2\beta-1}{n+2\alpha-\beta-1}=0\,\,\,\,
\mbox{and}\,\,\,\frac{2n+2\alpha-2\beta-1}{n-\beta}=\infty.
$$

 We have
$$
\frac{n-\beta}{2n+2\alpha-2\beta-1}<\inf_{x\in
\mathbb{R}^{n}}\frac{1}{p(x)}\leq \sup_{x\in
\mathbb{R}^{n}}\frac{1}{p(x)}<\frac{n+2\alpha-\beta-1}{2n+2\alpha-2\beta-1}.
$$

Let $\frac{1}{p(x)}=\frac{1}{2}+r(x)$. By assumption we have
\begin{equation} \label{3.10}
\frac{n-\beta}{2n+2\alpha-2\beta-1}-\frac{1}{2}<\inf_{x\in\mathbb{R}^{n}}r(x)\leq\sup_{x\in\mathbb{R}^{n}}r(x)<
\frac{n+2\alpha-\beta-1}{2n+2\alpha-2\beta-1}-\frac{1}{2}.
\end{equation}

It is easy to see that the equation
\begin{equation}\label{3.11}
\frac{1}{p(x)}=\frac{1-\theta}{2}+\frac{\theta}{\widetilde{p}(x)};
\end{equation}
 is equivalent to
 \begin{equation} \label{3.12}
\frac{1}{2}+\frac{r(x)}{\theta}=\frac{1}{\widetilde{p}(x)}.
\end{equation}
Using \eqref{3.9} we may  take small $\delta>0$  such that
$$
\frac{n-\beta}{2n+2\alpha-2\beta-1}-\frac{1}{2}+\delta<\inf_{x\in\mathbb{R}^{n}}r(x)\leq\sup_{x\in\mathbb{R}^{n}}r(x)<
\frac{n+2\alpha-\beta-1}{2n+2\alpha-2\beta-1}-\frac{1}{2}-\delta.
$$
Then for $\theta,\,\,0<\theta<\frac{2\alpha-1}{2\alpha-1+2\beta},$
where
$\theta=\theta<\frac{2\alpha-1}{2\alpha-1+2\beta}-\theta_{0},\,\,\,\theta_{0}>0$
we have
$$
\frac{\frac{n-\beta}{2n+2\alpha-2\beta-1}-\frac{1}{2}+\delta}{\frac{2\alpha-1}{2n+2\alpha-2\beta+1}-\theta_{0}}<
\inf_{x\in\mathbb{R}^{n}}\frac{r(x)}{\theta}
\leq\sup_{x\in\mathbb{R}^{n}}\frac{r(x)}{\theta}<\frac{\frac{n+2\alpha-\beta-1}{2n+2\alpha-2\beta-1}-\frac{1}{2}-\delta}{
\frac{2\alpha-1}{2n+2\alpha-2\beta+1}-\theta_{0}}
$$

$$
-\frac{1}{2}\frac{\frac{2a-1}{2n+2a-2\beta-1}-2\delta}{\frac{2a-1}{2n+2a-2\beta-1}-\theta_{0}}<\inf_{x\in\mathbb{R}^{n}}\frac{r(x)}{\theta}
\leq\sup_{x\in\mathbb{R}^{n}}\frac{r(x)}{\theta}<\frac{1}{2}\frac{\frac{2a-1}{2n+2a-2\beta-1}-2\delta}{\frac{2a-1}{2n+2a-2\beta-1}-\theta_{0}}.
$$
If we take $\theta_{0}<2\delta$  we obtain
\begin{equation} \label{3.13}
-\frac{1}{2}<\inf_{x\in\mathbb{R}^{n}}\frac{r(x)}{\theta}\leq
\sup_{x\in\mathbb{R}^{n}}\frac{r(x)}{\theta}<\frac{1}{2}.
\end{equation}
From \eqref{3.11} and  \eqref{3.12} we get
 $$
 0<\inf_{x\in\mathbb{R}^{n}}\frac{1}{\widetilde{p}(x)}\leq\sup_{x\in\mathbb{R}^{n}}\frac{1}{\widetilde{p}(x)}<1.
 $$
Consequently we have
$1<\widetilde{p}_{-}\leq\widetilde{p}_{+}<\infty.$
\end{proof}

 \vskip+1cm  
{\sl Proof of  Theorem~\ref{th2.2}.}
Using the fact that if $p(\cdot)\in
\mathcal{P}_{\log}$ then $\widetilde{p}(\cdot):=\frac {2\theta p(\cdot)}{2-(1-\theta)p(\cdot)}\
\in \mathcal{P}_{\log}$ and   by Lemma~\ref{lm3.1} we have $1<\widetilde{p}_{-} \le\widetilde{p}_{+}<\infty$,  it is followes 
from \cite[Theorem 4.3.8]{DHHR}, that $\widetilde{p}(\cdot)\in \B(\R^n)$. Now  the proof of Theorem~\ref{th2.2} follows from Theorem~\ref{th2.1}.
$\Box$
\vskip+1cm 
{\sl Proof of  Theorem~\ref{th2.3}.} 
As by the assumption $$\frac{2n+2\alpha-2\beta-1}{(n+2\alpha-\beta-1)p_{-}}< \frac{2n+2\alpha-2\beta-1}{(n-\beta)p_{+}},$$
 we can fined  $\theta$ such that 
  $$\frac{2n+2\alpha-2\beta-1}{(n+2\alpha-\beta-1)p_{-}}<\theta< \min\left(1,\frac{2n+2\alpha-2\beta-1}{(n-\beta)p_{+}}\right).$$
It is clear, that
 $$\frac{2n+2\alpha-2\beta-1}{(n+2\alpha-\beta-1)}<\theta p_{-}<\theta p_{+} <\frac{2n+2\alpha-2\beta-1}{(n-\beta)}.$$
 It is clear that if $p(\cdot)\in\cP_{\log}$ then $\theta p(\cdot)\in\cP_{\log}$ and by Theorem~\ref{th2.2} we get that the operator $\mathcal{M}_{m}$   is bounded in $L^{\theta p(\cdot)}(\mathbb{R}^{n})$.
Using the fact that
$[L^{\infty}(\mathbb{R}^{n}),L^{p(\cdot)\theta}(\mathbb{R}^{n})]_{\theta}=L^{p(\cdot)}(\mathbb{R}^{n}),\,\,(0<\theta<1)$
and the operator $\mathcal{M}_{m}$ is bounded in
$L^{\infty}(\mathbb{R}^{n})$  and $L^{\theta p(\cdot)}(\mathbb{R}^{n})$ we obtain that operator $\mathcal{M}_{m}$ is bounded in $L^{p(\cdot)}(\mathbb{R}^{n})$. 
$\Box$


\begin{thebibliography}{bif}

\bibitem{B} {\sc J. Bourgain}, {\sl Averages in the plane over convex curves and maximal 
operators,} {\rm J. Analyse Math.} {\bf 47}{\rm (1986), 69-85.}

\bibitem{CGS} {\sc M. Christ, L. Grafakos, P. Honzík and A. Seeger}, {\sl Maximal
functions, associated with Fourier multipliers of
Mikhlin-H\"{o}rmander type,} {\rm Math. Z.}{\bf  249 } {\rm (2005), 223–240.}

\bibitem{CUF}{\sc D. Cruz-Uribe, A. Fiorenza},
{\rm Variable Lebesgue Spaces: Foundations and Harmonic Analysis,}
Birkh\"auser, Basel (2013)

\bibitem{CUFN1}{\sc D. Cruz-Uribe, A. Fiorenza, C.J. Neigebauer},
{\sl The maximal function on variable $L^p$ spaces,} {\rm Ann. Acad.
Sci. Fenn.} {\bf 28} {\rm 1 (2003), 223--238}

\bibitem{CUFN2}{\sc D. Cruz-Uribe, A. Fiorenza, C.J. Neigebauer},
{\sl Corrections to: "The maximal function on variable $L^p$
spaces",} {\rm Ann. Acad. Sci. Fenn.} {\bf 29} {\rm 1 (2004),
247--249.}

\bibitem{DHHR}{\sc L. Diening, P. Harjulehto,
P. H\"ast\"o, M. R$\stackrel{\circ}{\mbox{u}}$\v zi\v cka},
{\rm Lebesgue and Sobolev spaces with variable exponents. Lecture
Notes in Mathematics,} {\rm 2017.} Springer, Heidelberg (2011)

\bibitem{FGK} {\sc A. Fiorenza, A. Gogatishvili and T. Kopaliani}, {\sl Boundedness of Stein's spherical maximal function in variable Lebesgue spaces and application to the wave equation,} 
{\rm Arch. Math. (Basel)} {\bf 100} {\rm  (2013), 465–472.}

\bibitem{FGK1} {\sc	A. Fiorenza, A. Gogatishvili and T. Kopaliani,} {\sl  Some estimates for imaginary powers of Laplace operators in variable Lebesgue spaces and applications}, {\rm preprint  arXiv:1304.6853.}

\bibitem{Gr} {\sc A. Greenleaf}, {\sl Principal curvature in harmonic analysis},
{\rm  Indiana Math. J.} {\bf 30} {\rm 4(1981), 519-537.}

\bibitem{H} {\sc L. H\"ormander},{\sl Estimates for translation invariant operators in $L_p$ spaces}, {\rm Acta Math.} {\bf 104} {\rm (1960), 93–139.} 

\bibitem{IoSa1} {\sc A. Iosevich and E. Sawyer}, {\sl Maximal averages over
surfaces}, {\rm Adv. in Math.} {\bf 132} {\rm (1997), 46-119.}

\bibitem{IoSa2} {\sc A. Iosevich and E. Sawyer}, {\sl Three problems motivated by the average decay of the Fourier transform. Harmonic analysis at Mount Holyoke (South Hadley, MA, 2001),} {\rm  205–215, Contemp. Math., 320, Amer. Math. Soc., Providence, RI, 2003}

\bibitem{M} {\sc S.G. Mikhlin}, {\sl  On the multipliers of Fourier integrals}, {\rm (Russian) Dokl. Akad. Nauk
SSSR (N.S.)} {\bf  109} {\rm (1956), 701–703.} 

\bibitem{R} {\sc J.L, Rubio de Francia}, {\sl   Maximal function and Foutier
transforms}, {\rm Duke Math. J.} {\bf  53} {\rm (1986), 395-404.}

\bibitem{S}{\sc E. M. Stein},
{\sl Maximal functions: Spherical means,} {\rm Proc. Natl. Acad.
Sci. USA} {\bf 73} {\rm 7 (1975), 2174--2175.}

\bibitem{St}{\sc E. M. Stein},
{\sl Maximal functions: Spherical means,} {\rm Proc. Natl. Acad.
Sci. USA} {\bf 73} {\rm 7 (1975), 2174--2175.}

\bibitem{Str} {\sc R. S. Strichartz},{\sl Fourier asymptotics of fractal
measures}, {\rm J. Funct. Anal.} {\bf 89} {\rm (1990), 154-187.}



\bibitem{Z} {\sc A. Zigmund}, {\rm Trigonometric series}, Cambridje Mathematical
Library. Cambrije University Press. Cambridje, (1988).



\end{thebibliography}
\end{document}